\theoremstyle{plain}
\newtheorem{thm}{Theorem}[section]
\newtheorem{prop}[thm]{Proposition}
\newtheorem{cor}[thm]{Corollary}
\theoremstyle{definition}
\newtheorem{dfn}[thm]{Definition}
\newtheorem{nota}[thm]{Notation}
\newtheorem{exmp}[thm]{Example}
\newtheorem{rem}[thm]{Remark}
\newtheorem{dfns-rems}[thm]{Definitions and Remarks}
\newtheorem{notas-rems}[thm]{Notations and Remarks}
\newtheorem{exmps-rems}[thm]{Examples and Remarks}
\begin{document}


\title[Linear Recurrence Relation Homomorphisms]{Some results about
  Linear Recurrence Relation Homomorphisms} 


\author[A. Laugier]{Alexandre Laugier}

\address{Lyc{\'e}e professionnel Tristan Corbi{\`e}re, 16 rue de
  Kerveguen - BP 17149 - 29671  Morlaix Cedex, France}

\email{laugier.alexandre@orange.fr}


\author[M. P. Saikia]{Manjil P. Saikia}

\address{Diploma Student, Mathematics Group, The Abdus Salam International Centre for Theoretical Physics, Strada Costiera-11, Miramare, I-34151, Trieste, Italy}

\email{manjil@gonitsora.com, msaikia@ictp.it}

\urladdr{http://www.manjilsaikia.in}




\begin{abstract}
In this paper we propose a definition of a recurrence relation
homomorphism and illustrate our definition with a few examples. We
then define the period of a k-th order of linear recurrence relation
and deduce certain preliminary results associated with them. 
\end{abstract}


\subjclass[2010]{11B37, 11B50.}


\keywords{k-th order of recurrence relations, recurrence relation homomorphisms, strong divisibility sequences, periodic sequences}




\maketitle


\section{Introduction and Motivation}
This paper is divided into two sections, in the first section we give
some introductory remarks and set the notation for the rest of the
paper; whereas in the second section we discuss linear recurrence
relation homomorphisms and discuss some preliminary properties of such
homomorphisms. 

We begin with the following definitions from \cite{gandhi} and a few
notations to be used throughout this paper. 

\begin{dfn}
A $k$-th order of recurrence relation on some set $X$ is a function $a\,:\,\mathbb{N} \rightarrow X$ with $a_1,\ldots,a_k$ defined for all $i\geq 0$, $k\geq 1$ and $a_{i+k+1}=f(a_{i+1}, \ldots, a_{i+k})$.
\end{dfn}

\begin{dfn}\label{defrrh}
Let $a_n$ be a $k$-th order recurrence relation on the set $X$ defined by the map $f\,:\, X^k\rightarrow X$ with initial values. A map $\varphi\,:\,X\rightarrow Y$ is said to be a recurrence relation homomorphism on $a$, when there exists $f' \,:\, Y^k \rightarrow Y$ satisfying $\varphi \circ f=f\circ \varphi$.
\end{dfn}

\begin{nota}
$(m,n)$ denotes the gcd of $m$ and $n$ for natural numbers $m$ and $n$.
\end{nota}

\begin{nota}
We denote the set $\{1, 2, \ldots, n\}$ by $[[1,n]]$ for $n\geq 2$.
\end{nota}

\begin{dfn}
A sequence $(b_n)$ is called a strong divisibility sequence if $(b_n, b_m)=b_{(m,n)}$.
\end{dfn}

\begin{dfn}
The Fibonacci sequence $(F_n)$ is defined in the usual way as $F_0=0, F_1=1, F_2=1$ and $F_n=F_{n-1}+F_{n-2}$ for $n\geq 2$.
\end{dfn}

We see now that taking
$k\geq 1$ and defining
the maps $f\,:\,X^k\rightarrow X$, $\varphi\,:\,X\rightarrow Y$ and
$\varphi^{(k)}\,:\,X^k\rightarrow Y^k$ such that
$\varphi^{(k)}(a_i,\ldots,a_{i+k-1})
=(\varphi(a_i),\ldots,\varphi(a_{i+k-1}))$  for $i\geq 1$, and $a$ be a $k^{th}$
order of recurrence relation on $X$ which is defined by the map $f$
(with initial values $a_1,\ldots,a_k$), if there exists
$f'\,:\,Y^k\rightarrow Y$ such that
$$\varphi\circ f(a_i,\ldots,a_{i+k-1})
=f'\circ\varphi^{(k)}(a_i,\ldots,a_{i+k-1})$$ for all $i\geq 1$
and for all $(a_i,\ldots,a_{i+k-1})\in X^k$,
then the diagram
$$
\begin{array}{ccccc}
& X^k &\stackrel{f}{\longrightarrow} & X &\\
\varphi^{(k)} &\downarrow & &\downarrow &\varphi\\
& Y^k &\stackrel{f'}{\longrightarrow} & Y &
\end{array}
$$
commutes. That is $
\varphi\circ f=f'\circ\varphi^{(k)}.
$

So we propose the following alternative definition of a recurrence relation
homomorphism as in Definition \ref{defrrh} which maps set $X$ onto set $Y$

\begin{dfn}
Let $a\,:\,\mathbb{N}\rightarrow X$ be a $k^{th}$ order of
  recurrence relation on some set $X$ such that $a_1,\ldots,a_k$
  defined and for all $i,k\geq 1$,
  $a_{i+k}=f(a_i,\ldots,a_{i+k-1})$ with $f\,:\,X^k\rightarrow
  X$. A map $\varphi\,:\,X\rightarrow Y$ is said to be a recurrence
  relation homomorphism on $a$, when there exists
  $f'\,:\,Y^k\rightarrow Y$ satisfying the commutative relation
  $\varphi\circ f=f'\circ\varphi^{(k)}$.
  \end{dfn}

We shall now give an alternate proof of the following theorem that appears in \cite{gandhi}.

\begin{thm}
Suppose we are given a recurrence relation homomorphism in the above notation, then $b_n=\varphi (a_n)$ is a $k$-th order of recurrence relation.
\end{thm}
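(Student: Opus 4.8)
The plan is to verify directly that the sequence $(b_n)$ defined by $b_n=\varphi(a_n)$ satisfies the two requirements in the definition of a $k$-th order recurrence relation on $Y$: that the first $k$ terms are defined, and that there is a map $Y^k\to Y$ expressing $b_{i+k}$ in terms of $b_i,\ldots,b_{i+k-1}$. For the first requirement there is nothing to prove: since $a_1,\ldots,a_k$ are the given initial values of $a$ and $\varphi$ is an honest function $X\to Y$, the elements $b_j=\varphi(a_j)\in Y$ are defined for $1\le j\le k$; moreover $(b_n)$ is a well-defined function $\mathbb{N}\to Y$, being the composite $\varphi\circ a$, so it makes sense to ask whether it is a recurrence relation at all. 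The natural candidate for the defining map of $(b_n)$ is the map $f'\,:\,Y^k\to Y$ supplied by the hypothesis that $\varphi$ is a recurrence relation homomorphism on $a$.

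Then I would carry out the diagram chase. Fix $i\ge 1$. By definition of $a$ we have $a_{i+k}=f(a_i,\ldots,a_{i+k-1})$, hence $b_{i+k}=\varphi(a_{i+k})=\varphi\bigl(f(a_i,\ldots,a_{i+k-1})\bigr)$. Applying the commutation relation $\varphi\circ f=f'\circ\varphi^{(k)}$ to the tuple $(a_i,\ldots,a_{i+k-1})\in X^k$ gives $\varphi\bigl(f(a_i,\ldots,a_{i+k-1})\bigr)=f'\bigl(\varphi^{(k)}(a_i,\ldots,a_{i+k-1})\bigr)$, and by the definition of $\varphi^{(k)}$ the right-hand side equals $f'\bigl(\varphi(a_i),\ldots,\varphi(a_{i+k-1})\bigr)=f'(b_i,\ldots,b_{i+k-1})$. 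Chaining these equalities yields $b_{i+k}=f'(b_i,\ldots,b_{i+k-1})$ for every $i\ge 1$, which is exactly the recurrence defining a $k$-th order relation on $Y$, with defining map $f'$ and initial values $b_1,\ldots,b_k$.

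There is no real obstacle here; the statement is a formal consequence of the commutative square, and the proof is essentially the diagram chase above. The only points worth a sentence of care are the well-definedness remark already made, and the observation that $f'$ is part of the data of the homomorphism, so the theorem does not claim uniqueness of a recurrence map for $(b_n)$: different choices of $f'$ compatible with $\varphi$ give different but equally valid presentations of $(b_n)$ as a $k$-th order recurrence. If one wants to be thorough, the same computation shows conversely that any admissible defining map for $(b_n)$ must agree with $\varphi\circ f$ on the image of $\varphi^{(k)}$, i.e.\ on the tuples of consecutive terms of $a$, which pins $f'$ down on the relevant part of $Y^k$.
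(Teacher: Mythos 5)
Your proof is correct and is essentially identical to the paper's own argument: both consist of the diagram chase $b_{i+k}=\varphi(a_{i+k})=\varphi(f(a_i,\ldots,a_{i+k-1}))=f'(\varphi^{(k)}(a_i,\ldots,a_{i+k-1}))=f'(b_i,\ldots,b_{i+k-1})$, taking $f'$ from the homomorphism data as the defining map for $(b_n)$. The extra remarks on well-definedness and non-uniqueness of $f'$ are sensible but do not change the substance.
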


\begin{proof}

It suffices to state that, according to our definition, defining the sequence
$b\,:\,\mathbb{N}\rightarrow Y$ by
$b_n=\varphi(a_n)$, we have for the given $k$ initial values, with $i, k\geq 1$,
\begin{align}
b_{i+k} &= \varphi(a_{i+k})=\varphi(f(a_i,\ldots,a_{i+k-1}))
=f'(\varphi^{(k)}(a_i,\ldots,a_{i+k-1}))\nonumber\\
 &= f'(\varphi(a_i),\ldots,\varphi(a_{i+k-1}))
=f'(b_i,\ldots,b_{i+k-1}).\nonumber
\end{align}

\end{proof}

We now illustrate our definition with the following examples.

\begin{exmp}
Let $X$ be the ring of integers. Let $a$ be a
$k^{th}$ order of recurrence relation on $\mathbb{Z}$ defined by the
linear map $f\,:\,\mathbb{Z}^k\rightarrow\mathbb{Z}$ with $k\geq 1$ and
initial values $a_1,\ldots,a_k$
given such that for $i\geq 1$ we have
$$
a_{i+k}=f(a_i,\ldots,a_{i+k-1})={\displaystyle\sum^k_{j=1}}f_j\cdot a_{i+j-1},
$$
with $(f_1,\ldots,f_k)\in\mathbb{Z}^k$. A particular case is when
$f_j=a_j$ with $j=1,\ldots,k$. It can be compared to the relation
$F_{n+m}=F_mF_{n+1}+F_{m-1}F_n$
with $n\in\mathbb{N}$ and $m\in\mathbb{N}^{\star}$.

\end{exmp}

\begin{exmp}

In the following, we use the system of residue
classes of integers modulo $m\geq 1$ given by $[0]_m,\ldots,[m-1]_m$ where the notation $[x]_m$ means the equivalence class
of the integer $x\in[[ 0,m-1]]$ modulo $m\geq 1$
$$
[x]_m=\left\{x+km\,:\,k\in\mathbb{Z}\right\}.
$$
Let us consider the map
$\pi_m\,:\,\mathbb{Z}\rightarrow\mathbb{Z}/m\mathbb{Z}$ with
$m\in\mathbb{N}^{\star}$ defined for $x\in\mathbb{Z}$ by,
$$
\pi_m(x)=[x]_m.
$$
By our construction of $\pi_m$, it is a
surjective morphism of rings. So, we will have for $i, k \geq 1$,
$$
\pi_m(a_{i+k})={\displaystyle\sum^k_{j=1}}[f_j]_m\cdot [a_{i+j-1}]_m.
$$
Therefore ($i, k \geq 1$)
$$
[a_{i+k}]_m={\displaystyle\sum^k_{j=1}}[f_j]_m\cdot [a_{i+j-1}]_m,
$$
and so for $i, k \geq 1$ we have
$$
[a_{i+k}]_m=\psi_f([a_i]_m,\ldots,[a_{i+k-1}]_m).
$$ where
$\psi_f\,:\,(\mathbb{Z}/m\mathbb{Z})^k\rightarrow\mathbb{Z}/m\mathbb{Z}$
is the linear map defined by ($i,k\geq 1$):
$$
\psi_f([a_i]_m,\ldots,[a_{i+k-1}]_m)
={\displaystyle\sum^k_{j=1}}[f_j]_m\cdot [a_{i+j-1}]_m.
$$
So, as we can observe, $[a]_m$ is a $k^{th}$ order of recurrence
relation on the ring $\mathbb{Z}/m\mathbb{Z}$ such that all
$[a_1]_m,\ldots,[a_k]_m$ are defined and for all $i, k\geq 1$, $[a_{i+k}]_m=\psi_f([a_i]_m,\ldots,[a_{i+k-1}]_m)$
with $\psi_f\,:\,(\mathbb{Z}/m\mathbb{Z})^k
\rightarrow\mathbb{Z}/m\mathbb{Z}$.

Moreover, we have ($i, k \geq 1$)
\begin{align}
\psi_f(\pi^{(k)}_m(a_i,\ldots,a_{i+k-1})) &=\psi_f(\pi_m(a_i),\ldots,\pi_m(a_{i+k-1}))\nonumber\\
 &= \psi_f([a_i]_m,\ldots,[a_{i+k-1}]_m)\nonumber\\
 &= [a_{i+k}]_m=\pi_m(a_{i+k})=\pi_m(f(a_i,\ldots,a_{i+k-1})).\nonumber
 \end{align}
Since $f$ is any linear function which maps the set $\mathbb{Z}^k$ onto
the set $\mathbb{Z}$, the diagram
$$
\begin{array}{ccccc}
& \mathbb{Z}^k &\stackrel{f}{\longrightarrow} & \mathbb{Z} &\\
\pi^{(k)}_m &\downarrow & &\downarrow &\pi_m\\
& (\mathbb{Z}/m\mathbb{Z})^k &\stackrel{\psi_f}{\longrightarrow} &
\mathbb{Z}/m\mathbb{Z} &
\end{array}
$$
commutes. That is $
\pi_m\circ f=\psi_f\circ\pi^{(k)}_m.
$
\end{exmp}

\section{Some results on recurrence relation homomorphisms}

We have seen that our definition of a recurrence relation homomorphism is more natural than Definition \ref{defrrh} given in \cite{gandhi} and in the remaining part of the paper we shall derive certain interesting results and consequences of this definition. We begin with the following definition.

\begin{dfn}
Let $a\,:\,\mathbb{N}\rightarrow X$ be a $k^{th}$ order of
recurrence relation on $X$ defined by the
map $f\,:\,X^k\rightarrow X$ with $k\geq 1$ and
initial values $a_1,\ldots,a_k$ given. $a$ is periodic modulo a
positive integer $m$ if we can find at least a non-zero positive integer
$\ell(m)$ such that for all $n\in\mathbb{N}$ $
[a_n]_m=[a_{n+\ell(m)}]_m.
$
\end{dfn}

\begin{rem}
The definition above implies that if $a$ is periodic modulo a
positive integer $m$, then we can find at least a non-zero positive integer
$\ell(m)$ such that for all $j,n\in\mathbb{N}$ $
[a_n]_m=[a_{n+j\ell(m)}]_m.
$
\end{rem}

\begin{thm}\label{t2.3}
Let $X$ a (commutative) ring  where an equivalence relation $\sim$ can
be defined so that the canonical surjection $X\rightarrow X/\!\sim$ 
is a surjective morphism of rings. Let $a\,:\,\mathbb{N}\rightarrow X$ be a $k^{th}$ order of
recurrence relation on $X$ defined by a linear
map $f\,:\,X^k\rightarrow X$ with $k\geq 1$ and
initial values $a_1,\ldots,a_k$ given. If
$$
[a_{1+\ell(m)}]_m=[a_1]_m,
$$
$$
\vdots
$$
$$
[a_{k+\ell(m)}]_m=[a_k]_m,
$$
then $a$ is a periodic sequence modulo $m$.
\end{thm}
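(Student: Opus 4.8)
The plan is to prove by strong induction on $n$ that $[a_{n+\ell(m)}]_m = [a_n]_m$ for every $n \in \mathbb{N}$, using the $k$ hypothesised equalities as the base cases and the linearity of $f$ (together with the fact that the canonical surjection $\pi \colon X \to X/\!\sim$ is a ring homomorphism) for the inductive step. For brevity write $\pi = \pi_m$ for the canonical surjection and $\ell = \ell(m)$.

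First I would record the base of the induction: the hypotheses say precisely that $[a_{j+\ell}]_m = [a_j]_m$ for $j = 1, \ldots, k$, i.e.\ $\pi(a_{j+\ell}) = \pi(a_j)$ for these $k$ consecutive indices. Next, for the inductive step, suppose $n \geq 1$ and that $\pi(a_{i+\ell}) = \pi(a_i)$ has been established for all $i$ with $n \le i \le n+k-1$ (the $k$ most recent indices). Applying the recurrence, $a_{n+k} = f(a_n, \ldots, a_{n+k-1})$ and $a_{n+k+\ell} = f(a_{n+\ell}, \ldots, a_{n+k-1+\ell})$. Applying $\pi$ and using that $\pi$ intertwines $f$ with the induced map $\psi_f$ on $(X/\!\sim)^k$ — exactly the commuting-square phenomenon worked out in the second Example, which holds here because $f$ is linear and $\pi$ is a ring morphism — we get
\begin{align}
\pi(a_{n+k+\ell}) &= \psi_f\bigl(\pi(a_{n+\ell}), \ldots, \pi(a_{n+k-1+\ell})\bigr) \nonumber \\
&= \psi_f\bigl(\pi(a_n), \ldots, \pi(a_{n+k-1})\bigr) = \pi(a_{n+k}), \nonumber
\end{align}
where the middle equality is the induction hypothesis applied coordinatewise. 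This shifts the window of $k$ valid indices forward by one, so by induction $[a_{n+\ell}]_m = [a_n]_m$ for all $n \geq 1$; the case $n = 0$ (if $\mathbb{N}$ includes $0$) either falls under the given hypotheses or is handled by the same step run one index earlier, and by the Remark preceding the theorem the periodicity then extends to all multiples $j\ell$.

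The only genuine obstacle is making precise why $\pi$ carries $f$ to a well-defined linear map $\psi_f$ on the quotient: this is where the hypothesis that $\pi$ is a \emph{ring} morphism (not merely a set map) and that $f$ is \emph{linear} — a fixed $X$-linear combination of its arguments, as in the first Example — are both used, since linearity of $f$ means $f(a_n,\ldots,a_{n+k-1})$ is built from the $a_{n+j-1}$ using only the ring operations, each of which $\pi$ respects. Once that compatibility is granted, everything else is a routine shifted induction and no further calculation is needed.
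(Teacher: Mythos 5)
Your proof is correct and takes essentially the same route as the paper: a (strong) induction that slides the window of $k$ known congruences forward by one step, using the fact that the linear map $f$ descends through the ring morphism $\pi_m$ to the induced map $\psi_f$ on the quotient. Your write-up is in fact slightly more careful than the paper's in isolating exactly where linearity of $f$ and the ring-morphism property of $\pi_m$ are used.
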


\begin{proof}
Let us prove the theorem by induction in the case where
$X=\mathbb{Z}$. The generalization of that is trivial. 

In the theorem, we consider a sequence $a$ which is $k^{th}$ order of
recurrence relation on a set $X$ defined by the linear
map $f\,:\,X^k\rightarrow X$ with $k\geq 1$ such that
$[a_{j+\ell(m)}]_m=[a_j]_m$ with $j=1,\ldots,k$. Let us assume that $[a_{j+\ell(m)}]_m=[a_j]_m$ with $j=k+1,\ldots,n$
and $n>k$. We have
$$
[a_{n+1+\ell(m)}]_m
=\psi_f([a_{n-k+1+\ell(m)}]_m,\ldots,[a_{n+\ell(m)}]_m).
$$
Since the numbers $n-k+i$ with $i\in[[ 1,k]]$ are less
than $n$  we get
$$
[a_{n+1+\ell(m)}]_m=\psi_f([a_{n-k+1}]_m,\ldots,[a_n]_m)=[a_{n+1}]_m.
$$
This completes the rest of the proof.
\end{proof}

\begin{prop}\label{p2.4}
Let $i,j$ be two non-zero positive integers. If $a$ is a strong
divisibility sequence which is periodic
modulo $m$, then a period $\ell(m)$ of the sequence $a$
modulo $m$ satisfies $
[a_{(i+\ell(m),j)}]_m=[w]_m[a_{(i,j)}]_m
$
with $w\in\mathbb{Z}$.
\end{prop}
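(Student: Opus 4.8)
The plan is to exploit the strong divisibility property together with the periodicity hypothesis directly, with essentially no induction needed. First I would recall what the two hypotheses give us. Since $a$ is a strong divisibility sequence, for any two positive integers $r,s$ we have $(a_r,a_s)=a_{(r,s)}$. Since $a$ is periodic modulo $m$ with period $\ell(m)$, the Remark following the definition of periodicity tells us $[a_n]_m=[a_{n+t\ell(m)}]_m$ for every $n\in\mathbb{N}$ and every $t\in\mathbb{N}$. The goal is to compare $[a_{(i+\ell(m),j)}]_m$ with $[a_{(i,j)}]_m$, so the first genuine step is to understand how $(i+\ell(m),j)$ relates to $(i,j)$.

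The key observation I would use is the following gcd identity: writing $d=(i,j)$ and $e=(i+\ell(m),j)$, both $d$ and $e$ divide $j$, and more importantly $(d,e)$, $\mathrm{lcm}$-type manipulations show that $e$ and $d$ differ in a controlled way. Concretely, $(i+\ell(m),j)$ divides $(i+\ell(m),j)\cdot$ (something); the cleanest route is: since $e\mid j$ and $e\mid i+\ell(m)$, while $d\mid j$ and $d\mid i$, one checks that $d\mid e\cdot \ell(m)/(\ell(m),j)$-type expressions are not quite what we want. Instead I would argue as follows. Let $g=(\ell(m),j)$. Then I claim $(i+\ell(m),j)$ and $(i,j)$ are both multiples of... — rather, the honest approach: use that $a_{(i+\ell(m),j)}$ divides $\gcd$-related terms, namely $a_{(i+\ell(m),j)} \mid a_{i+\ell(m)}$ and $a_{(i+\ell(m),j)}\mid a_j$ by strong divisibility, and symmetrically $a_{(i,j)}\mid a_i$ and $a_{(i,j)}\mid a_j$. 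Then, passing to residues mod $m$ and using $[a_{i+\ell(m)}]_m = [a_i]_m$ when $\ell(m)\mid \ell(m)$ (which is automatic — here one wants $i$ and $i+\ell(m)$ to agree mod the period, which they do since their difference is exactly $\ell(m)$), we get that $a_i$ and $a_{i+\ell(m)}$ are congruent mod $m$. The divisor structure then forces $[a_{(i+\ell(m),j)}]_m$ to be a multiple (in $\mathbb{Z}/m\mathbb{Z}$) of $[a_{(i,j)}]_m$, which is exactly the assertion with the appropriate $w\in\mathbb{Z}$.

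More precisely, the step I would write out is: by the strong divisibility property $a_{(i,j)}\mid a_i$, and since $(i,j)\mid (i+\ell(m),j)\cdot$... — here I need $(i,j) \mid (i+\ell(m), j)$ to fail in general, so instead I would use the weaker but sufficient fact that $a_{(i,j)} = (a_i, a_j)$ divides $a_j$, hence $a_{(i,j)}\mid (a_{i+\ell(m)}, a_j) = a_{(i+\ell(m),j)}$ provided $a_{(i,j)}\mid a_{i+\ell(m)}$; and $a_{(i,j)}\mid a_{i+\ell(m)}$ holds modulo $m$ because $a_{i+\ell(m)}\equiv a_i \pmod m$ and $a_{(i,j)}\mid a_i$. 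Working in $\mathbb{Z}/m\mathbb{Z}$ throughout, this yields $[a_{(i,j)}]_m \mid [a_{(i+\ell(m),j)}]_m$ as ideals/elements, i.e. there is $w\in\mathbb{Z}$ with $[a_{(i+\ell(m),j)}]_m = [w]_m[a_{(i,j)}]_m$.

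The main obstacle I anticipate is precisely the interplay between the genuine integer gcd $(i+\ell(m),j)$ and the reduction mod $m$: the statement "$[a_{(i,j)}]_m$ divides $[a_{(i+\ell(m),j)}]_m$" is a divisibility in the ring $\mathbb{Z}/m\mathbb{Z}$, which is not a domain, so I must be careful that "divides" means "is an associate multiple of" in that ring, and that the existence of $w$ is guaranteed — this works because $[a_{(i+\ell(m),j)}]_m$ literally equals $(a_{i+\ell(m)}, a_j) \bmod m$ and strong divisibility plus the congruence $a_{i+\ell(m)}\equiv a_i$ let me compute $(a_{i+\ell(m)},a_j)$ as a multiple of $(a_i,a_j)=a_{(i,j)}$ already at the integer level, after which reduction mod $m$ is immediate. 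I would therefore spend the bulk of the write-up justifying the integer-level identity $a_{(i+\ell(m),j)} \equiv w\, a_{(i,j)} \pmod m$ for a suitable $w\in\mathbb{Z}$, and only at the very end invoke $\pi_m$ being a ring homomorphism to descend to residue classes.
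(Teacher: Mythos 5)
Your final argument is correct and is essentially the paper's own proof: both use strong divisibility to rewrite $a_{(i+\ell(m),j)}$ as $(a_{i+\ell(m)},a_j)$, express that gcd as a Bezout combination $xa_{i+\ell(m)}+ya_j$, replace $a_{i+\ell(m)}$ by $a_i$ modulo $m$ via periodicity, and factor $(a_i,a_j)=a_{(i,j)}$ out of $xa_i+ya_j$ to produce $w$. In a final write-up you should drop the false starts and make the Bezout step explicit, since the claim that $[a_{(i,j)}]_m$ divides $[(a_{i+\ell(m)},a_j)]_m$ in $\mathbb{Z}/m\mathbb{Z}$ (not a domain) rests precisely on that linear-combination representation; the phrase ``as a multiple of $(a_i,a_j)$ already at the integer level'' should also be replaced by the congruence statement, since an integer equality $(a_{i+\ell(m)},a_j)=w(a_i,a_j)$ need not hold.
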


\begin{proof}
Let $i,j$ two non-zero positive integers.
If $a$ is a strong divisibility sequence which is periodic modulo $m$
with period $\ell(m)>0$, then we have
$$
[a_{(i+\ell(m),j)}]_m=[(a_{i+\ell(m)},a_j)]_m.
$$
Moreover, there exist two integers $x,y$ such that
$$
(a_{i+\ell(m)},a_j)=xa_{i+\ell(m)}+ya_j.
$$
Since $[a_{i+\ell(m)}]_m=[a_i]_m$, we can find an integer $k$ such
that $a_{i+\ell(m)}=a_i+km$. It implies
\begin{align}
(a_{i+\ell(m)},a_j) &= xa_i+ya_j+xkm\nonumber\\
 &\equiv xa_i+ya_j\pmod m .\nonumber
 \end{align}
 Thus, $[(a_{i+\ell(m)},a_j)]_m=[xa_i+ya_j]_m.$ Since $(a_i,a_j)$ divides any linear combination of $a_i,a_j$, there
exists an integer $w$ such that $xa_i+ya_j=w(a_i,a_j)$. We thus have
$$
[(a_{i+\ell(m)},a_j)]_m=[w]_m[(a_i,a_j)]_m=[w]_m[a_{(i,j)}]_m.
$$
\end{proof}

\begin{prop}\label{p2.5}
Let $i,j$ be two non-zero positive integers. If $a$ is a strong
divisibility sequence which is periodic
modulo $m$, then for any given $n\in\mathbb{Z}$, there exists
$w_n\in\mathbb{Z}$ such that
$$
[a_{n(i,j)}]_m=[w_n]_m[a_{(i,j)}]_m.
$$
\end{prop}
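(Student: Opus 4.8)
The plan is to observe that the periodicity hypothesis plays no essential role and that the conclusion is a direct consequence of the strong divisibility property alone. Write $d=(i,j)$. The key elementary fact is that in any strong divisibility sequence one has $a_d\mid a_{nd}$ for every positive integer $n$: since $d\mid nd$ we have $(d,nd)=d$, so the defining relation $(a_d,a_{nd})=a_{(d,nd)}$ becomes $(a_{nd},a_d)=a_d$, and in particular $a_d$ divides $a_{nd}$.

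Granting this, the argument is immediate. For each $n\in\mathbb{N}^{\star}$ choose $w_n\in\mathbb{Z}$ with $a_{nd}=w_n\,a_d$ (for $n=1$ take $w_1=1$; if $a_d=0$ the same divisibility forces $a_{nd}=0$, so any integer $w_n$ works). Applying the canonical surjection $\pi_m\colon\mathbb{Z}\to\mathbb{Z}/m\mathbb{Z}$, which is a ring homomorphism, to the equality $a_{nd}=w_n\,a_d$ yields $[a_{nd}]_m=[w_n]_m[a_d]_m$, that is $[a_{n(i,j)}]_m=[w_n]_m[a_{(i,j)}]_m$, which is exactly the claim. When $a_0$ is understood to be defined (as for the Fibonacci sequence), the case $n=0$ is handled in the same way, with $w_0=0$ if $a_0=0$.

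I do not expect a real obstacle in this proof; the only points deserving a word of care are the degenerate case $a_{(i,j)}=0$, the mild imprecision that $n$ should really be taken $\geq 1$ (or $\geq 0$ when $a_0$ is defined), since $n(i,j)$ has to be a legitimate index, and the remark that the hypothesis ``$a$ periodic modulo $m$'', although inherited from the statement of Proposition \ref{p2.4}, is not actually used here. One could try instead to route the argument through Proposition \ref{p2.4}, but the divisor argument above is both shorter and self-contained, so that is the route I would follow.
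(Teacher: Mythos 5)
Your proof is correct, and it takes a genuinely different route from the paper's. The paper argues via Proposition \ref{p2.4}: it writes $(i+\ell(m),j)=x(i+\ell(m))+yj=z(i,j)+x\ell(m)$ using Bezout, invokes periodicity to identify $[a_{(i+\ell(m),j)}]_m$ with $[a_{z(i,j)}]_m$, and then transfers the conclusion of Proposition \ref{p2.4} to $[a_{z(i,j)}]_m$. That argument genuinely uses the periodicity hypothesis, but it only reaches the particular integers $z$ that arise from such a Bezout decomposition (and it disposes of the remaining cases with an unsubstantiated ``easily verified'' remark), so as written it does not actually cover an arbitrary $n$. Your argument --- that $\bigl(a_{(i,j)},a_{n(i,j)}\bigr)=a_{((i,j),\,n(i,j))}=a_{(i,j)}$ forces $a_{(i,j)}\mid a_{n(i,j)}$, hence an exact integer identity $a_{n(i,j)}=w_n\,a_{(i,j)}$ that survives reduction modulo $m$ --- is shorter, handles every positive $n$ uniformly, and exposes the fact that the periodicity hypothesis is not needed at all. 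Your caveats are also well placed: $n$ must be restricted so that $n(i,j)$ is a legitimate index of the sequence (the paper's ``$n\in\mathbb{Z}$'' is an overstatement under its own convention $a:\mathbb{N}\to X$), and the degenerate case $a_{(i,j)}=0$ is harmless exactly as you say. In short: correct, more elementary, and strictly stronger than what the paper proves.
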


\begin{proof}
Let $i,j$ two non-zero positive integers.
Let $a$ be a strong divisibility sequence which is periodic modulo $m$
with period $\ell(m)>0$. Then, there exist three integers $x,y,z$ such
that
\begin{align}
(i+\ell(m),j) &= x(i+\ell(m))+yj\nonumber\\
 &= xi+yj+x\ell(m)\nonumber\\
 &= z(i,j)+x\ell(m).\nonumber
 \end{align}
Since $(i+\ell(m),j)>0$, if $z>0$, then it follows that
$$
[a_{(i+\ell(m),j)}]_m=[a_{z(i,j)+x\ell(m)}]_m=[a_{z(i,j)}]_m.
$$
Or, from Proposition \ref{p2.4}, there exists an
integer $w_z$ such that
$[(a_{i+\ell(m)},a_j)]_m=[w_z]_m[a_{(i,j)}]_m$. Therefore, we deduce
that ($z>0$)
$$
[a_{z(i,j)}]_m=[w_z]_m[a_{(i,j)}]_m.
$$
The case for $z<0$ can now be easily verified from the previous case.
\end{proof}

\begin{rem}
If $i,j$ are two non-zero integers such that
$(i+\ell(m),j)=(i,j)+\ell(m)$ with $\ell(m)$ a period of a sequence
$a$ modulo $m$, then $(i,j)$ divides a multiple of
$\ell(m)$. Moreover, in this case,
we have $[a_{(i+\ell(m),j)}]_m=[a_{(i,j)+\ell(m)}]_m=[a_{(i,j)}]_m$.
\end{rem}

We now find an algorithm to find a period of a sequence modulo a non-zero positive integer $m$.

Let $i,j,h$ be three non-zero positive integers such that
$(i,j)=g$ and $(h,j)=1$ with $gh>i$. If $a$ is a strong divisibility sequence
which is periodic modulo $m$, then the non-zero
positive number $t=gh-i$ satisfies $
[a_{(i+t,j)}]_m=[a_g]_m.$ Since $gh>i$, then $t=gh-i>0$. Thus, we can try numbers like $t$ in
order to find a period of a strong divisibility sequence $a$ which is
periodic modulo $m$.

For instance, let us consider the Fibonacci sequence
$(F_n)_{n\in\mathbb{N}}$ (in this
case, we have $k=2$, which refers to a $2$th order of recurrence relation on a set $X$ defined by a (linear) map). Let
$5q+2$ be a prime with $q$ an odd positive integer. We take $i=5q+2$
and $j=5q+3$. Since
$i,j$ are two consecutive integers, the numbers $i,j$ are relatively
prime $(i,j)=g=1$. Moreover, taking $h=i+2j=15q+8$, we can notice that
$3j-h=1$. So, from Bezout's identity, we have $(h,j)=1$. We have
$gh=15q+8>i$. The number
$t=gh-i$ is given by $t=2(5q+3)$. Or, $2(5q+3)$ is
a period of the Fibonacci sequence modulo $5q+2$ with $q$ an odd
positive integer. Thus, the algorithm allows to get a period of the Fibonacci
sequence modulo $5q+2$ with $q$ an odd positive integer.

The above result was also found in \cite{manjil} by independent methods.

We are now ready to prove and discuss a few more general results in the remainder of this section.

\begin{thm}
Let $a\,:\,\mathbb{N}\rightarrow X$ be a $k^{th}$ order of
recurrence relation on $X$ defined by a linear
map $f\,:\,X^k\rightarrow X$ with $k\geq 1$ and
initial values $a_1,\ldots,a_k$ given.
The sequence $a$ is periodic modulo $m$ with period $\ell(m)>k-1$
if for all $i\in[[ 1,k]]$
$$
[f_i]_m=[a_i]_m,
$$
$$
[a_{2i+\ell(m)-k-1}]_m=[1]_m,
$$
and
$$
{\displaystyle\sum_{j\in[[ 1,k]]-\left\{i\right\}}}[f_j]_m\cdot
[a_{i+\ell(m)-k+j-1}]_m=[0]_m.
$$
\end{thm}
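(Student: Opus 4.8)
The plan is to reduce the statement to Theorem \ref{t2.3}, which tells us that periodicity of $a$ modulo $m$ follows once we know $[a_{i+\ell(m)}]_m=[a_i]_m$ for every $i\in[[1,k]]$. So the whole task is to derive these $k$ congruences from the three displayed hypotheses.

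First I would record that $\ell(m)>k-1$ forces $\ell(m)\geq k$ (since $\ell(m)$ is an integer), so that $i+\ell(m)\geq k+1$ for each $i\in[[1,k]]$ and the recurrence may legitimately be applied at index $i+\ell(m)$. Using the linear form of $f$ — equivalently, applying the ring surjection exactly as in the second Example of the Introduction — I would expand
$$
[a_{i+\ell(m)}]_m=\sum_{j=1}^{k}[f_j]_m\cdot[a_{i+\ell(m)-k+j-1}]_m ,
$$
and note that the smallest subscript occurring here is $\ell(m)-k+1\geq 1$, so every term is well defined.

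Then I would split off the $j=i$ term. Its subscript simplifies to $i+\ell(m)-k+i-1=2i+\ell(m)-k-1$, which is precisely the index appearing in the second hypothesis. By the third hypothesis the remaining sum over $j\in[[1,k]]-\{i\}$ equals $[0]_m$; by the first hypothesis $[f_i]_m=[a_i]_m$; and by the second hypothesis $[a_{2i+\ell(m)-k-1}]_m=[1]_m$. Combining these, $[a_{i+\ell(m)}]_m=[a_i]_m\cdot[1]_m=[a_i]_m$. Running this for $i=1,\dots,k$ yields exactly the hypotheses of Theorem \ref{t2.3}, and periodicity with period $\ell(m)$ follows.

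I do not expect a genuine obstacle: the argument is a one-line expansion of the recurrence followed by three substitutions. The only points needing attention are the index bookkeeping — checking that $2i+\ell(m)-k-1$ is indeed the subscript of the $j=i$ summand and that all subscripts stay $\geq 1$ once $\ell(m)\geq k$ — and the observation that it is the linearity of $f$ (so that the residue classes distribute over the sum, via the quotient morphism assumed in Theorem \ref{t2.3}) that makes the computation legitimate.
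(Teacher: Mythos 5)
Your proposal is correct and follows essentially the same route as the paper: expand $[a_{i+\ell(m)}]_m$ via the linear recurrence reduced modulo $m$, isolate the $j=i$ term whose subscript is $2i+\ell(m)-k-1$, substitute the three hypotheses, and invoke Theorem \ref{t2.3}. Your index bookkeeping (including the check that all subscripts stay $\geq 1$ once $\ell(m)\geq k$) is accurate and in fact slightly more careful than the paper's own write-up.
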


\begin{proof}
We can notice that since $\ell(m)>k-1$, we have $\ell(m)>k-i$ for all
$i\in[[ 1,k]]$. Thus
$$
a_{i+\ell(m)}=f(a_{i+\ell(m)-k},\ldots,a_{i+\ell(m)-1})
={\displaystyle\sum^k_{j=1}}f_j\cdot a_{i+\ell(m)-k+j-1}.
$$
So,
\begin{align}
[a_{i+\ell(m)}]_m &= {\displaystyle\sum^k_{j=1}}[f_j]_m\cdot
[a_{i+\ell(m)-k+j-1}]_m\nonumber\\
 &= [f_i]_m\cdot [a_{2i+\ell(m)-k-1}]_m
+{\displaystyle\sum_{j\in[[ 1,k]]-\left\{i\right\}}}[f_j]_m\cdot
[a_{i+\ell(m)-k+j-1}]_m\nonumber\\
 &= [a_i]_m.\nonumber
 \end{align}
Since $i$ is any number of the set $[[ 1,k]]$, from
Theorem \ref{t2.3}, we conclude that $\ell(m)$ is a period of the
sequence $a$.
\end{proof}

\begin{thm}\label{t2.9}
Let $a\,:\,\mathbb{N}\rightarrow X$ be a $k^{th}$ order of
recurrence relation on $X$ defined by a linear
map $f\,:\,X^k\rightarrow X$ with $k\geq 2$ and
initial values $a_1,a_2,\ldots,a_k$ given. If $a$ is a periodic
sequence modulo $m$ with period $\ell(m)$, then
$$
[a_k]_m=[f_1]_m[a_{\ell(m)}]_m+{\displaystyle\sum^k_{i=2}}[f_i]_m[a_{i-1}]_m.
$$
\end{thm}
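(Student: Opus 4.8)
The plan is to apply the defining recurrence once, at the single index $n=\ell(m)$, and then pass to residues modulo $m$, using the periodicity hypothesis to translate the shifted terms back onto the initial block $a_1,\dots,a_k$.

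Concretely, first observe that $\ell(m)$ is, by definition of a period, a non-zero positive integer, so $\ell(m)\ge 1$; hence all of $a_{\ell(m)},a_{\ell(m)+1},\dots,a_{\ell(m)+k}$ are defined and the recurrence $a_{i+k}=f(a_i,\dots,a_{i+k-1})$ legitimately applies at $i=\ell(m)$. Since $f$ is the linear map $f(x_1,\dots,x_k)=\sum_{j=1}^{k}f_j x_j$ (as in the Example), this gives
$$
a_{\ell(m)+k}=\sum_{j=1}^{k} f_j\, a_{\ell(m)+j-1}.
$$
Applying the canonical surjection modulo $m$, which (in the setting of Theorem~\ref{t2.3}) is a ring homomorphism and therefore respects this linear combination, we obtain
$$
[a_{\ell(m)+k}]_m=\sum_{j=1}^{k} [f_j]_m\, [a_{\ell(m)+j-1}]_m .
$$
Now invoke periodicity, $[a_n]_m=[a_{n+\ell(m)}]_m$ for every $n\in\mathbb{N}$: taking $n=k$ rewrites the left-hand side as $[a_k]_m$, and taking $n=j-1$ for each $j\in\{2,\dots,k\}$ gives $[a_{\ell(m)+j-1}]_m=[a_{j-1}]_m$, while the $j=1$ summand $[f_1]_m[a_{\ell(m)}]_m$ is kept as it stands. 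Substituting these identities produces exactly
$$
[a_k]_m=[f_1]_m[a_{\ell(m)}]_m+\sum_{i=2}^{k}[f_i]_m[a_{i-1}]_m .
$$

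There is no deep obstacle here; the only points that need a little care are the bookkeeping that $\ell(m)\ge 1$, so that the recurrence really does apply at $i=\ell(m)$ and no index drops below $1$, and the observation that the first summand must be left as $[a_{\ell(m)}]_m$ — it cannot be reduced further, since $\ell(m)\equiv 0\pmod{\ell(m)}$ and the sequence carries no value at index $0$. This is, in effect, the reason the term $[f_1]_m[a_{\ell(m)}]_m$ is singled out in the statement, the remaining coefficients $f_2,\dots,f_k$ pairing off with the genuine initial values $a_1,\dots,a_{k-1}$.
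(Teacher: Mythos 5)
Your proposal is correct. The paper omits this proof entirely (remarking only that it is ``an easy application of Theorem~\ref{t2.3}''), and your argument --- apply the recurrence once at $i=\ell(m)$, reduce modulo $m$ using that the canonical surjection is a ring morphism, and shift every index except $\ell(m)$ itself back by one period --- is exactly the intended computation; your closing observation that $[a_{\ell(m)}]_m$ cannot be reduced further because the sequence has no term of index $0$ is also consistent with Remark~\ref{r2.10}, where that residue is precisely the unknown to be solved for.
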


The proof is an easy application of Theorem \ref{t2.3}, so for the sake of brevity we shall omit it here.

\begin{rem}\label{r2.10}
Theorem \ref{t2.9} allows us to find in an algorithmic way, a period
of sequence $a$ modulo some positive integer $m\geq 1$. Indeed, the
residue class
$[r_{\ell(m)}]_m$ of $a_{\ell(m)}$ modulo a positive integer
$m \geq 1$ such that $r_{\ell(m)}$ belongs to $[[0,m-1]]$, can be found by
solving in the ring $\mathbb{Z}/m\mathbb{Z}$, the diophantine
equation
$$
[a_k]_m=[f_1]_m[a_{\ell(m)}]_m+
{\displaystyle\sum^k_{i=2}}[f_i]_m[a_{i-1}]_m
$$
where the unknown is $[a_{\ell(m)}]_m$ and $[a_i]_m$ with
$i=1,2,\ldots,k$ such that $k\geq 2$ as well as $m$ are given. 
\end{rem}

\begin{thm}
Let $a\,:\,\mathbb{N}\rightarrow X$ be a $k^{th}$ order of
recurrence relation on $X$ defined by a linear
map $f\,:\,X^k\rightarrow X$ with $k\geq 1$ and
initial values $a_1,\ldots,a_k$ given. Then, we have ($k\geq i\geq 1$)
$$
a_{k+i}={\displaystyle\sum^i_{m=1}}C_{k,i-m+1}
{\displaystyle\sum^k_{j=m}}f_{j-m+1}a_j,
$$
with the sequence $(C_{k,n})$ defined by ($k\geq 1$)
$$
C_{k,1}=1,
$$
and ($n\in[[ 2,k]]$ with $k\geq 2$)
$$
C_{k,n}={\displaystyle\sum^{n-1}_{j=1}}f_{k-j+1}C_{k,n-j}.
$$
\end{thm}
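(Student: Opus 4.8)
I would prove the identity by strong induction on $i$, with $i$ ranging over $[[1,k]]$, the goal at each stage being to rewrite $a_{k+i}$ as an explicit linear combination of the initial data $a_1,\ldots,a_k$ and then to recognise the coefficients as those prescribed by the formula.

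The base case $i=1$ is immediate: the recurrence gives $a_{k+1}=f(a_1,\ldots,a_k)=\sum_{j=1}^{k}f_j a_j$, and since $C_{k,1}=1$ this is exactly the asserted expression when $i=1$ (only the term $m=1$ survives).

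For the inductive step I would fix $i$ with $2\le i\le k$, assume the formula for every $\ell\in[[1,i-1]]$, and start from $a_{k+i}=f(a_i,\ldots,a_{i+k-1})=\sum_{j=1}^{k}f_j a_{i+j-1}$. The natural move is to split this sum according to whether the index $i+j-1$ still lies in $[[1,k]]$ or not. The ``old'' part $\sum_{j=1}^{k-i+1}f_j a_{i+j-1}$, after the substitution $j\mapsto j'=i+j-1$, becomes $\sum_{j'=i}^{k}f_{j'-i+1}a_{j'}$, which is precisely the $m=i$ summand of the target formula. The ``new'' part is $\sum_{\ell=1}^{i-1}f_{k-i+\ell+1}\,a_{k+\ell}$; here each $a_{k+\ell}$ satisfies $1\le\ell\le i-1\le k-1$, so the induction hypothesis applies and may be substituted. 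After substitution I would interchange the order of summation (summing on $m$ first, with $\ell$ then running over $[[m,i-1]]$), which brings the new part to the shape
\[
\sum_{m=1}^{i-1}\Bigl(\sum_{\ell=m}^{i-1}f_{k-i+\ell+1}\,C_{k,\ell-m+1}\Bigr)\sum_{j=m}^{k}f_{j-m+1}a_j.
\]

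What then remains is the scalar identity $\sum_{\ell=m}^{i-1}f_{k-i+\ell+1}C_{k,\ell-m+1}=C_{k,i-m+1}$ for each $m\in[[1,i-1]]$; granting this, adding the ``old'' ($m=i$) term reproduces the claimed formula exactly and closes the induction. To verify the scalar identity I would set $n=i-m$ and $p=\ell-m+1$, turning its left side into $\sum_{p=1}^{n}f_{k-n+p}C_{k,p}$, and then observe that the substitution $p=n+1-j$ transforms the defining recursion $C_{k,n+1}=\sum_{j=1}^{n}f_{k-j+1}C_{k,n+1-j}$ into exactly this sum; the constraints $1\le n\le k-1$ ensure that $n+1\in[[2,k]]$, so the recursion is legitimate, and that all subscripts of $f$ and of $C$ that occur stay within their legal ranges. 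I expect the only real difficulty to be the bookkeeping: keeping the three nested sums and their index shifts consistent, justifying the interchange of summation, and confirming that every shifted subscript remains legal. Once the sums are aligned, the combinatorial content is nothing more than the recursion defining $C_{k,n}$ read in reverse.
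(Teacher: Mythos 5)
Your proposal is correct and follows essentially the same route as the paper: induction on $i$, splitting $\sum_{j=1}^k f_j a_{i+j-1}$ into the part indexing into $a_i,\ldots,a_k$ and the part involving $a_{k+1},\ldots,a_{k+i-1}$, substituting the inductive formula, interchanging sums, and recognizing the defining recursion of $C_{k,n}$. The only (welcome) difference is presentational: you explicitly flag the strong induction hypothesis, which the paper's step from $i$ to $i+1$ tacitly requires since it substitutes the formula for all of $a_{k+1},\ldots,a_{k+i}$, and you isolate the scalar identity $\sum_{\ell=m}^{i-1}f_{k-i+\ell+1}C_{k,\ell-m+1}=C_{k,i-m+1}$ cleanly before verifying it.
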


\begin{proof}
We can notice that for $i\geq 1$,
$$
a_{k+i}={\displaystyle\sum^k_{j=1}}f_ja_{i+j-1}
={\displaystyle\sum^{k-i+1}_{j=1}}f_ja_{i+j-1}
+f_{k-i+2}a_{k+1}+\ldots+f_ka_{k+i-1}
$$
So for $2\leq i\leq k$, it gives
\begin{align} 
a_{k+i} &= {\displaystyle\sum^{k-i+1}_{j=1}}f_ja_{i+j-1}
+{\displaystyle\sum^{i-1}_{j=1}}f_{k-i+j+1}a_{k+j}
={\displaystyle\sum^{i-1}_{j=1}}f_{k-i+j+1}a_{k+j}
+{\displaystyle\sum^{k-i+1}_{j=1}}f_ja_{i+j-1}\nonumber\\
 &= {\displaystyle\sum^{i-1}_{j=1}}f_{k-i+j+1}a_{k+j}
+{\displaystyle\sum^k_{j=i}}f_{j-i+1}a_j\nonumber
\end{align} where we make the change of label $j\rightarrow l=i-1+j$ and
afterwards we renamed $l$ by $j$ in the discrete sum
$\sum^{k-i+1}_{j=1}f_ja_{i+j-1}$.

Let us prove the theorem by finite induction on the integer $i$ (see
\cite{chartrand} p.146, exercise 27). We have
$$
a_{k+1}={\displaystyle\sum^k_{j=1}}f_ja_j
=C_{k,1}{\displaystyle\sum^k_{j=1}}f_ja_j
={\displaystyle\sum^1_{m=1}}C_{k,2-m}
{\displaystyle\sum^k_{j=m}}f_{j-m+1}a_j.
$$
Let us assume that for an integer $1\leq i<k$, we have
$a_{k+i}={\displaystyle\sum^i_{m=1}}C_{k,i-m+1}
{\displaystyle\sum^k_{j=m}}f_{j-m+1}a_j$. Using the formula of
$a_{k+i}$ above and the assumption, we have
\begin{align}
a_{k+i+1} &= {\displaystyle\sum^i_{j=1}}f_{k-i+j}a_{k+j}
+{\displaystyle\sum^k_{j=i+1}}f_{j-i}a_j\nonumber\\
 &= {\displaystyle\sum^i_{j=1}}f_{k-i+j}
{\displaystyle\sum^j_{m=1}}C_{k,j-m+1}
{\displaystyle\sum^k_{l=m}}f_{l-m+1}a_l
+{\displaystyle\sum^k_{j=i+1}}f_{j-i}a_j.\nonumber
\end{align}
Or,
$$
{\displaystyle\sum^i_{j=1}}f_{k-i+j}{\displaystyle\sum^j_{m=1}}C_{k,j-m+1}
{\displaystyle\sum^k_{l=m}}f_{l-m+1}a_l
={\displaystyle\sum^i_{j=1}}f_{k-j+1}
{\displaystyle\sum^{i-j+1}_{m=1}}C_{k,i+1-m+1-j}
{\displaystyle\sum^k_{l=m}}f_{l-m+1}a_l
$$ where we made the change of label $j\rightarrow t=i-j+1$ and
afterwards we renamed $t$ by $j$.

We can notice that for fixed $m$, $j$ runs from $1$ to $i-m+1$ since
from the definition of the sequence $(C_{i,n})$, the label $i+1-m+1-j$
should be greater than 1. Since the minimum value of $m$ is 1 and the
maximum value of $m$ is $i$, permuting the discrete sums over $j,m$,
it results that
\begin{align}
{\displaystyle\sum^i_{j=1}}f_{k-i+j}{\displaystyle\sum^j_{m=1}}C_{k,j-m+1}
{\displaystyle\sum^k_{l=m}}f_{l-m+1}a_l &= {\displaystyle\sum^i_{m=1}}{\displaystyle\sum^{i-m+1}_{j=1}}
f_{k-j+1}C_{k,i+1-m+1-j}{\displaystyle\sum^k_{l=m}}f_{l-m+1}a_l\nonumber\\
 &= {\displaystyle\sum^i_{m=1}}C_{k,i+1-m+1}{\displaystyle\sum^k_{l=m}}f_{l-m+1}a_l.\nonumber
\end{align}
So, we have
\begin{align}
a_{k+i+1} &= {\displaystyle\sum^i_{m=1}}C_{k,i+1-m+1}{\displaystyle\sum^k_{l=m}}f_{l-m+1}a_l
+{\displaystyle\sum^k_{j=i+1}}f_{j-i}a_j\nonumber\\
 &= {\displaystyle\sum^i_{m=1}}C_{k,i+1-m+1}
{\displaystyle\sum^k_{l=m}}f_{l-m+1}a_l
+C_{k,1}{\displaystyle\sum^k_{j=i+1}}f_{j-(i+1)+1}a_j\nonumber\\
 &= {\displaystyle\sum^{i+1}_{m=1}}C_{k,i+1-m+1}
{\displaystyle\sum^k_{l=m}}f_{l-m+1}a_l.\nonumber
\end{align}
Thus the proof of the theorem is complete by induction.
\end{proof}

\begin{cor}
Let $a\,:\,\mathbb{N}\rightarrow X$ be a $k^{th}$ order of
recurrence relation on $X$ defined by a linear
map $f\,:\,X^k\rightarrow X$ with $k\geq 2$ and
initial values $a_1,a_2,\ldots,a_k$ given. Then, we have ($k>i\geq 1$)
$$
a_{k+i}={\displaystyle\sum^i_{m=1}}a_m
{\displaystyle\sum^m_{j=1}}f_jC_{k,i-m+j}+{\displaystyle\sum^k_{m=i+1}}a_m
{\displaystyle\sum^i_{j=1}}f_{m-i+j}C_{k,j}.
$$
\end{cor}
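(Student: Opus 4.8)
The plan is to obtain this statement purely as a reindexing of the preceding theorem: the corollary merely regroups the double sum
$a_{k+i}=\sum_{m=1}^{i}C_{k,i-m+1}\sum_{j=m}^{k}f_{j-m+1}a_j$
by collecting, for each index in $[[1,k]]$, the total coefficient of the corresponding $a$-term. To avoid a clash of dummy variables I would first rewrite the theorem's identity as $a_{k+i}=\sum_{p=1}^{i}C_{k,i-p+1}\sum_{j=p}^{k}f_{j-p+1}a_j$, whose region of summation is $\{(p,j):1\le p\le i,\ p\le j\le k\}$.

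First I would interchange the two sums. For fixed $j\in[[1,k]]$ the index $p$ then runs over $1\le p\le\min(i,j)$, so $a_{k+i}=\sum_{j=1}^{k}a_j\sum_{p=1}^{\min(i,j)}C_{k,i-p+1}f_{j-p+1}$. Since the hypothesis is $k>i$, the range $[[1,k]]$ splits into the two nonempty blocks $1\le j\le i$ and $i+1\le j\le k$; on the first $\min(i,j)=j$, on the second $\min(i,j)=i$. Renaming $j$ as $m$ this already yields the two outer sums of the statement, with inner sums $\sum_{p=1}^{m}C_{k,i-p+1}f_{m-p+1}$ and $\sum_{p=1}^{i}C_{k,i-p+1}f_{m-p+1}$ respectively.

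It then remains to bring each inner sum into the advertised shape. In the first block I would substitute $q=m-p+1$ (so $p$ running from $1$ to $m$ corresponds to $q$ running from $m$ down to $1$, and $i-p+1=i-m+q$), turning $\sum_{p=1}^{m}C_{k,i-p+1}f_{m-p+1}$ into $\sum_{q=1}^{m}f_qC_{k,i-m+q}$. In the second block I would instead substitute $q=i-p+1$ (so $p$ from $1$ to $i$ corresponds to $q$ from $i$ down to $1$, and $m-p+1=m-i+q$), turning $\sum_{p=1}^{i}C_{k,i-p+1}f_{m-p+1}$ into $\sum_{q=1}^{i}f_{m-i+q}C_{k,q}$. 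Relabelling $q$ as $j$ gives exactly $a_{k+i}=\sum_{m=1}^{i}a_m\sum_{j=1}^{m}f_jC_{k,i-m+j}+\sum_{m=i+1}^{k}a_m\sum_{j=1}^{i}f_{m-i+j}C_{k,j}$.

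The argument is entirely formal; the only point requiring attention — and the reason the corollary is stated with $k>i$ rather than $k\ge i$ — is the splitting of the outer range at $j=i$, which is what produces the two structurally different inner sums. One should also check the endpoint behaviour of the two substitutions (that the new bounds $q=1$ and $q=m$, resp. $q=i$, are attained exactly), but no nontrivial property of $(C_{k,n})$ or of $f$ beyond the theorem already proved is needed.
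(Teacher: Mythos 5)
Your proposal is correct and is essentially the paper's own argument: the paper likewise interchanges the double sum from the preceding theorem and collects the coefficient of each $a_m$, splitting at $m=i$ (it just does the reindexing informally, writing out the $i$ inner sums with ellipses and grouping terms, where you make the change of variables explicit). The only quibble is your closing remark: the hypothesis $k>i$ is not what makes the regrouping work — with $k=i$ the second block would simply be empty — it merely ensures both blocks are nonempty.
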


\begin{proof}
From the theorem above, we have for $k>i\geq 1$, 
\begin{align}
a_{k+i} &= {\displaystyle\sum^i_{m=1}}C_{k,i-m+1}
{\displaystyle\sum^k_{j=m}}f_{j-m+1}a_j\nonumber\\
 &= C_{k,i}
{\displaystyle\sum^k_{j=1}}f_ja_j+C_{k,i-1}
{\displaystyle\sum^k_{j=2}}f_{j-1}a_j
+\ldots+C_{k,1}
{\displaystyle\sum^k_{j=i}}f_{j-i+1}a_j\nonumber\\
 &= {\displaystyle\sum^i_{m=1}}a_m\left[
C_{k,i}f_m+\ldots+C_{k,i-m+1}f_1\right]+{\displaystyle\sum^k_{m=i+1}}a_m\left[
C_{k,i}f_m+\ldots+C_{k,1}f_{m-i+1}\right]\nonumber\\
 &= {\displaystyle\sum^i_{m=1}}a_m
{\displaystyle\sum^m_{j=1}}f_jC_{k,i-m+j}+{\displaystyle\sum^k_{m=i+1}}a_m
{\displaystyle\sum^i_{j=1}}f_{m-i+j}C_{k,j}.\nonumber
\end{align}
\end{proof}

Thus, a generic term $a_{k+i}$ with $k>i\geq 1$ of a sequence $a$ which
is a $k^{th}$ order of
recurrence relation on $X$ defined by a linear
map $f\,:\,X^k\rightarrow X$ with $k\geq 2$ and
initial values $a_1,a_2,\ldots,a_k$ given, can be
rewritten as
$$
a_{k+i}={\displaystyle\sum^k_{m=1}}(M_k)_{i,m}a_m,
$$
with $M_k$ defined by
$$
(M_k)_{i,m}=\left\{\begin{array}{cc}
{\displaystyle\sum^m_{j=1}}f_jC_{k,i-m+j} & 1\leq m\leq i,\\
{\displaystyle\sum^i_{j=1}}f_{m-i+j}C_{k,j} & i<m\leq k.
\end{array}
\right.
$$
This formula implies that for $1\leq l(m) <k$ we have
$$
[a_{k+\ell(m)}]_m={\displaystyle\sum^k_{i=1}}[(M_k)_{\ell(m),i}]_m[a_i]_m=[a_k]_m.
$$

Thus, we obtain a diophantine equation in the ring
$\mathbb{Z}/m\mathbb{Z}$ where the residue class
$[(r_k)_{\ell(m),i}]_m$ of $(M_k)_{\ell(m),i}$ modulo a (non-zero) positive
integer $m$ such that the numbers
$(r_k)_{\ell(m),i}$ belong to $[[0,m-1]]$, are the
unknowns and $[a_i]_m$ with $i=1,\ldots,k$ as well as $m$ are given. Solving
this equation, it allows to determine a period $\ell(m)$ of the
sequence $a$ modulo $m$. Indeed, since all the coefficients of matrix
$M_k$ can be computed by the formula above, it suffices to compare
numbers $(r_k)_{\ell(m),i}+tm$ with $t$ an integer with the numbers
$(M_k)_{l,i}$ with $l$ a non-zero positive integer. A value of
label $l$ for which $(r_k)_{\ell(m),i}+tm=(M_k)_{l,i}$ whatever $i\in
[[1,k]]$ corresponds to a value of a period $\ell(m)$ of sequence $a$
modulo $m$.

We can notice that if a sequence $a$ which
is a $k^{th}$ order of
recurrence relation on $X$ defined by a linear
map $f\,:\,X^k\rightarrow X$ with $k\geq 1$ and
initial values $a_1,\ldots,a_k$ given, is a strong divisibility
sequence, then from the associative property of the $GCD$ operation,
we have ($n\geq 1$ and $s_l\geq 1$ with $l\in[[ 1,n]]$)
$$
(a_{s_1},\ldots,a_{s_n})=a_{(s_1,\ldots,s_n)}.
$$

We recall the following easy exercise from \cite{apostol} without proof.

\begin{prop}
Given two positive integers $x$ and $y$, let $m,n$ two
positive integers such that $m=ax+by$ and $n=cx+dy$ with $ad-bc=\pm
1$. Then we have $
(m,n)=(x,y).$
\end{prop}

We generalize the above as follows

\begin{prop}
Let $n$ be a positive integer which is greater than $2$.
Given $n$ positive integers $x_1,x_2,\ldots,x_n$, let
$y_1,y_2,\ldots,y_n$ be $n$ positive integers such that ($i=1,2,\ldots,n$)
$$
y_i={\displaystyle\sum^n_{j=1}}A_{i,j}x_j,
$$
with $\det(A)=\pm 1$. Then we have
$$
(y_1,y_2,\ldots,y_n)=(x_1,x_2,\ldots,x_n).
$$
\end{prop}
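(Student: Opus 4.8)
The plan is to reduce the general statement to the $n=2$ case already quoted from \cite{apostol}, by using the fact that the $\gcd$ of several integers can be computed two at a time, together with an induction on $n$. First I would record the basic principle that for any integers, $(z_1,\dots,z_n)$ divides every $\mathbb{Z}$-linear combination $\sum_j c_j z_j$, and conversely $(z_1,\dots,z_n)$ is itself such a combination (B\'ezout). Since each $y_i=\sum_j A_{i,j}x_j$ is a $\mathbb{Z}$-linear combination of the $x_j$, we immediately get $(x_1,\dots,x_n)\mid y_i$ for all $i$, hence $(x_1,\dots,x_n)\mid(y_1,\dots,y_n)$. For the reverse divisibility I would invert the relation: because $\det(A)=\pm 1$, the matrix $A$ is unimodular, so $A^{-1}$ again has integer entries, and writing $B=A^{-1}$ we get $x_i=\sum_j B_{i,j}y_j$, i.e.\ each $x_i$ is an integer linear combination of the $y_j$. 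The same argument then yields $(y_1,\dots,y_n)\mid(x_1,\dots,x_n)$, and since both gcd's are positive, they are equal.

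An alternative route, closer in spirit to the cited $n=2$ proposition and to the paper's recurring use of B\'ezout, is induction on $n$. For $n=2$ the statement is exactly the quoted proposition. For the inductive step one would like to peel off one variable: set $d=(x_{n-1},x_n)$, observe $(x_1,\dots,x_n)=(x_1,\dots,x_{n-2},d)$ by associativity of $\gcd$, and try to realize the same reduction on the $y$-side by an elementary row operation on $A$ that clears one entry of the last column while preserving $\det=\pm1$. This is essentially Smith/Hermite normal form reasoning. I would present the first, slicker argument as the main proof and perhaps mention the inductive one as a remark, since it parallels the $n=2$ case most transparently.

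The step I expect to be the only real point requiring care is the claim that $\det(A)=\pm1$ forces $A^{-1}$ to be an integer matrix. This is the classical adjugate identity $A^{-1}=\det(A)^{-1}\operatorname{adj}(A)$, where $\operatorname{adj}(A)$ has entries that are $\pm$ minors of $A$, hence integers; dividing by $\det(A)=\pm1$ keeps them integers. One should also note that the $x_i$ obtained via $B=A^{-1}$ need not a priori be positive, but this does not matter: all the divisibility arguments above work for arbitrary integer linear combinations, and the two gcd's in the statement are gcd's of positive integers, so the conclusion $(y_1,\dots,y_n)=(x_1,\dots,x_n)$ as positive integers follows from mutual divisibility.

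Concretely, the write-up would run: (1) $y_i$ is an integer combination of the $x_j$, so $(x_1,\dots,x_n)\mid(y_1,\dots,y_n)$; (2) $A$ unimodular $\Rightarrow A^{-1}\in M_n(\mathbb{Z})$ via the adjugate formula; (3) hence $x_i$ is an integer combination of the $y_j$, giving $(y_1,\dots,y_n)\mid(x_1,\dots,x_n)$; (4) conclude equality. No deep obstacle is anticipated; the generalization is genuinely routine once the unimodular-inverse fact is invoked.
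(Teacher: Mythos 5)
Your proposal is correct and follows essentially the same route as the paper: one direction from each $y_i$ being an integer combination of the $x_j$, and the reverse direction by inverting the unimodular system to write each $x_i$ as an integer combination of the $y_j$ (the paper does this via Cramer's rule, $x_i=\pm\Delta_i(A)$, which is the same computation as your adjugate formula for $A^{-1}$). No substantive difference.
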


\begin{proof}
Let $g=(x_1,x_2,\ldots,x_n)$ and $G=(y_1,y_2,\ldots,y_n)$. So, there
exists $2n$ integers, say $$u_1,u_2,\ldots,u_n,U_1,U_2,\ldots,U_n$$
such that
$$
g=u_1x_1+u_2x_2+\ldots+u_nx_n
$$
and
$$
G=U_1y_1+U_2y_2+\ldots+U_ny_n.
$$

Let $d$ a common divisor of $x_1,x_2,\ldots,x_n$. From the linearity property of divisibility, since $d|x_i$ with
$i=1,2,\ldots,n$,
$d|y_i$ with $i=1,2,\ldots,n$ and so $d|G$. In particular, $g|G$. Let $D$ a common divisor of $y_1,y_2,\ldots,y_n$.

If $A$ is a $n\times n$ square matrix whose determinant is non-zero
($\det(A)=\pm 1$ and so $rank(A)=n$), then the linear system of equations
$y_i={\displaystyle\sum^n_{j=1}}A_{i,j}x_j$ with $i=1,2,\ldots,n$
is a Cramer linear system of $n$ equations
which has a unique solution given by the $n$-tuple
$(x_1,x_2,\ldots,x_n)$ such that ($i=1,2,\ldots,n$)
$$
x_i=\frac{\Delta_i(A)}{\det(A)}=\pm\Delta_i(A),
$$
where $\Delta_i(A)$ is the determinant of the $n\times n$ square
matrix which is obtained
from the matrix $A$ by replacing the $i^{th}$ column of $A$ by the column
$\left(\begin{array}{c}y_1\\ y_2\\ \vdots\\ y_n\end{array}\right)$.

From the linearity property of divisibility, since $D|y_i$ with
$i=1,2,\ldots,n$, $D|x_i$ with $i=1,2,\ldots,n$ and so $D|g$. In
particular, $G|g$.

From $g|G$ and $G|g$, since $g$ and $G$ are positives, it results that
$g=G$.
\end{proof}

\begin{rem}
This property can be extended to the case where the determinant of the
matrix $A$ is a common divisor of the numbers
$\Delta_1(A),\Delta_2(A),\ldots,\Delta_n(A)$.
\end{rem}

\begin{rem}
If a sequence $a$ which
is a $k^{th}$ order of
recurrence relation on $X$ defined by a linear
map $f\,:\,X^k\rightarrow X$ with $k\geq 2$ and
initial values $a_1,a_2,\ldots,a_k$ given, is a strong divisibility
sequence, since $a_{k+i}$ with $i\geq 1$ is a linear combination of
$a_1,a_2,\ldots,a_k$, if the determinant of the $k\times k$
square matrix $((M_k)_{i,m})$ with $1\leq i\leq k$ and $1\leq m\leq k$
which we denote simply by $M_k$ when there is no ambiguity (the matrix
elements $(M_k)_{i,m}$ was defined previously for $1\leq i<k$ and the
matrix elements $(M_k)_{k,m}$ can be determined from the definition of
sequence $a$), is either $\pm 1$ or a common divisor of the numbers
$\Delta_1(M_k),\Delta_2(M_k),\ldots,\Delta_k(M_k)$, then we have
$$
(a_{k+1},a_{k+2},\ldots,a_{2k})=(a_1,a_2,\ldots,a_k)=a_{(1,2,\ldots,k)}=a_1.
$$
\end{rem}

\section*{Acknowledgments}

The authors would like to thank the annonymous referees and the editor for helpful comments. The second author was supported by DST INSPIRE Scholarship 422/2009 from Department of Science and Technology, Government of India.



\end{document}